\documentclass[11pt]{amsart}
\usepackage{amssymb}
\usepackage{stmaryrd}
\usepackage[textsize=scriptsize,obeyFinal]{todonotes}
\usepackage{tikz-cd}
\usepackage{hyperref}

\newtheorem{theorem}{Theorem}[section]
\newtheorem*{theorem*}{Theorem}
\newtheorem{lemma}[theorem]{Lemma}
\newtheorem*{lemma*}{Lemma}
\newtheorem{proposition}[theorem]{Proposition}
\newtheorem*{proposition*}{Proposition}

\newtheorem*{corollary*}{Corollary}

\theoremstyle{definition}
\newtheorem{definition}[theorem]{Definition}
\newtheorem*{definition*}{Definition}

\newtheorem*{example*}{Example}

\newtheorem*{ques*}{Question}

\newtheorem*{claim*}{Claim}

\newtheorem*{remark*}{Remark}

\usepackage{bm}
\newcommand{\ii}{\item}
	
\newcommand{\inv}{^{-1}}

\newcommand{\ZZ}{\mathbb Z}

\usepackage{amsaddr}
\usepackage{mathrsfs}
\usepackage{tikz-cd}

\title[Multiplicative and exponential orthomorphisms]
{Multiplicative and Exponential Variations
of Orthomorphisms of Cyclic Groups}
\date{\today}
\author{Evan Chen}
\address{Department of Mathematics, Massachusetts Institute of Technology}
\email{evanchen@mit.edu}

\subjclass[2010]{11A07, 05A05}
\keywords{Cayley table, orthomorphism, transversal}

\newcommand{\Zc}[1]{\mathbb{Z} / #1 \mathbb{Z}}

\begin{document}

\begin{abstract}
	An orthomorphism is a permutation $\sigma$ of $\{1, \dots, n-1\}$
	for which $x + \sigma(x) \mod n$ is also a permutation on $\{1, \dots, n-1\}$.
	Eberhard, Manners, Mrazovi\'{c}, showed that the number
	of such orthomorphisms is $(\sqrt{e} + o(1)) \cdot \frac{n!^2}{n^n}$
	for odd $n$ and zero otherwise.

	In this paper we prove two analogs of these results
	where $x+\sigma(x)$ is replaced by $x \sigma(x)$
	(a ``multiplicative orthomorphism'')
	or with $x^{\sigma(x)}$ (an ``exponential orthomorphism'').
	Namely, we show that no multiplicative orthomorphisms exist
	for $n > 2$, but that exponential orthomorphisms
	exist whenever $n$ is twice
	a prime $p$ such that $p-1$ is squarefree.
	In the latter case we then estimate the number of
	exponential orthomorphisms.
\end{abstract}

\maketitle

\section{Introduction}
\subsection{Synopsis}
For us, an \emph{orthomorphism} of the cyclic group $\Zc{n}$
(for $n \ge 2$)
is a permutation $\sigma : \{1, \dots, n-1\} \to \{1, \dots, n-1\}$
such that the map $x \mapsto \sigma(x) + x$ is also
a permutation of $\{1, \dots, n-1\}$ (modulo $n$).\footnote{%
	In the literature one often takes
	$\sigma : \{0, \dots, n-1\} \to \{0, \dots, n-1\}$ instead,
	but by shifting $\sigma$ we may assume $\sigma(0)=0$,
	and so these two definitions are essentially equivalent.
	For example in \cite{Stones} the orthomorphisms
	we consider are called ``canonical'' orthomorphisms.
}
(It is possible to define an orthomorphism for a general group $G$
in exactly the same way as above, as in Evans \cite{evansgroup},
but we will not need this generality here.)

Orthomorphisms arise naturally in the study of Latin squares
(specifically pairs of ``orthogonal'' Latin squares) \cite{LatinWanless}.
They are in correspondence with several other combinatorial objects,
for example
\begin{itemize}
	\ii transversals of the addition table of $\Zc{n}$,
	\ii magic juggling sequences of period $n$,
	\ii and placements of non-attacking semi-queens on toroidal chessboards,
\end{itemize}
among others \cite{LatinWanless}.
They have thus been studied substantially.

It is a nice elementary result due to Euler \cite{euler}
that such an orthomorphism exists exactly when $n$ is odd.
In 1991, Vardi \cite{Latin33} conjectured that
for odd $n$ the number of orthomorphisms is between $c_1^n n!$ and $c_2^n n!$
for some constants $0 < c_1 < c_2 < 1$.
After some work on the upper bound \cite{Latin13,Latin17,Latin22}
and on the lower bound \cite{LatinWanless,Latin27},
Vardi's conjecture was completely resolved in 2015 when
Eberhard, Manners, and Mrazovi\'{c} proved (in our notation) the following result.
\begin{theorem*}[Eberhard, Manners, and Mrazovi\'{c}, \cite{EMM15}]
	For odd integers $n \ge 1$,
	the number of (canonical) orthomorphisms of $\Zc{n}$ is
	\[ \left( \sqrt{e} + o(1) \right) \frac{n!^2}{n^n}. \]
\end{theorem*}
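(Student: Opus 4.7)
My approach would be via Fourier analysis on $\ZZ/n$. Write $N_n$ for the number of orthomorphisms; these are in bijection with pairs $(\sigma,\tau) \in S_{n-1}\times S_{n-1}$ such that $\tau(x) \equiv x+\sigma(x) \pmod n$ for every $x \in \{1,\ldots,n-1\}$. Using the identity $[a \equiv 0 \pmod n] = \frac{1}{n}\sum_{\xi \in \ZZ/n} e^{2\pi i \xi a/n}$ one constraint at a time yields
\[
N_n \;=\; \frac{1}{n^{n-1}}\sum_{\bm\xi \in (\ZZ/n)^{n-1}} |P(\bm\xi)|^2 \, \exp\!\left(-\tfrac{2\pi i}{n}\textstyle\sum_x \xi_x x\right),
\]
where $P(\bm\xi) = \sum_{\sigma \in S_{n-1}} \exp\!\left(\tfrac{2\pi i}{n}\sum_x \xi_x \sigma(x)\right)$. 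This reduces the problem to analyzing a sum of character sums on the symmetric group.

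A first observation is that the trivial character $\bm\xi = \bm 0$ alone contributes only $\tfrac{(n-1)!^2}{n^{n-1}} \sim \tfrac{n!^2}{n^{n+1}}$, which is a factor of $n$ \emph{smaller} than the target $\tfrac{n!^2}{n^n}$. Hence the main term must arise from a coherent sum over many near-trivial frequencies. I would therefore stratify the sum by the ``type'' of $\bm\xi$ (for instance, the number of nonzero coordinates $\xi_x$, or the structure of $\gcd(\xi_x,n)$), and estimate $P(\bm\xi)$ in each regime via permutation-character bounds: Hoeffding-type concentration for $\sum_x \xi_x \sigma(x)$ over a uniform $\sigma \in S_{n-1}$ for ``spread'' $\bm\xi$, and a direct expansion for ``sparse'' $\bm\xi$.

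The constant $\sqrt{e}$ is what I would expect to extract from a saddle-point / Gaussian calculation, akin to the constants appearing when one counts doubly stochastic integer arrays near the uniform one. Concretely, for a uniformly random $\sigma \in S_{n-1}$ the column counts $(\#\{x : x+\sigma(x) = y\})_{y \in \ZZ/n}$ are close to an independent Poisson$(1)$ vector, conditioned to sum to $n-1$. The probability this vector is identically $1$ differs from the naive estimate $\tfrac{(n-1)!}{n^{n-1}}$ by a second-order correction factor, which when computed via a bivariate Gaussian integral over the two conserved quantities (the total and the weighted sum $\sum_y y \cdot \#\{x:x+\sigma(x)=y\} \pmod n$) produces exactly $\sqrt{e}$.

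The hardest step will almost certainly be making this $\sqrt e$ rigorous: one must control the Fourier contributions from a ``medium'' range of $\bm\xi$ sharply enough that neither a larger constant leaks in from sparse characters nor the saddle point is lost in error terms. Sparse characters require a careful combinatorial expansion because $P(\bm\xi)$ is not near $(n-1)!$ when only a few $\xi_x$ are nonzero; spread characters require character-sum cancellation on $S_{n-1}$ to push their total contribution into the $o(n!^2/n^n)$ error. Interpolating between these two regimes and showing the intermediate range also contributes negligibly is where I would expect the bulk of the technical work to lie.
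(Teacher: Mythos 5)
This statement is not proved in the paper at all: it is quoted verbatim from \cite{EMM15} as background, so there is no ``paper's own proof'' to compare against. What you have written is a proof \emph{plan}, not a proof, and the gap is essentially the entire argument. Your Fourier setup is correct and is indeed the right starting point (it is in the spirit of the approach of Eberhard--Manners--Mrazovi\'{c}): the identity
\[
N_n = \frac{1}{n^{n-1}}\sum_{\bm\xi} |P(\bm\xi)|^2 \exp\Bigl(-\tfrac{2\pi i}{n}\textstyle\sum_x \xi_x x\Bigr)
\]
is valid, and your observation that the trivial frequency alone falls short of the main term by a factor of $n$ is accurate. In fact one can already sharpen your ``coherent sum over many near-trivial frequencies'': the $n$ constant vectors $\bm\xi = (c,c,\dots,c)$ each contribute exactly $(n-1)!^2$ (for odd $n$ the phase is trivial since $\sum_x \sigma(x) = \sum_x x = n(n-1)/2$), and these alone account for the full $n!^2/n^n$; the $\sqrt{e}$ is then a correction coming from the remaining frequencies. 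But none of the real work is done: you have not bounded $|P(\bm\xi)|$ in any regime, not defined the stratification you would use, not carried out the Gaussian/local-limit computation that is supposed to produce $\sqrt{e}$, and not shown that the intermediate range of frequencies contributes $o(n!^2/n^n)$. You acknowledge this yourself in your last paragraph --- ``the hardest step will almost certainly be making this $\sqrt{e}$ rigorous'' --- and that hardest step, together with the sparse- and medium-frequency estimates, is precisely where the entire difficulty of the theorem lives. As it stands the proposal establishes nothing beyond the exact Fourier identity and an order-of-magnitude heuristic for the main term.
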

In fact, the result of \cite{EMM15} holds for any abelian group
of odd order; Eberhard \cite{eberhard} extended this result to hold for
non-cyclic abelian groups of even order as well.
Variants of the problem have also been considered;
for example, \cite{Stones} considers \emph{compound orthomorphisms}
and uses them to find some congruences,
while \emph{partial orthomorphisms} are studied in \cite{Latin31}.

Our paper considers the variant of the problem in which
we replace $x + \sigma(x)$ by either $x\sigma(x)$ or $x^{\sigma(x)}$.
We lay out these definitions now.

\begin{definition}
	For $n \ge 2$, a \emph{multiplicative orthomorphism} of $\Zc{n}$
	is a permutation $\sigma : \{1, \dots, n-1\} \to \{1, \dots, n-1\}$
	for which $x \mapsto x \sigma(x)$ is also
	a permutation of $\{1, \dots, n-1\}$ (modulo $n$).
\end{definition}
\begin{definition}
	For $n \ge 2$, an \emph{exponential orthomorphism} of $\Zc{n}$
	is a permutation $\sigma : \{1, \dots, n-1\} \to \{1, \dots, n-1\}$
	for which $x \mapsto x^{\sigma(x)}$ is also
	a bijection of $\{1, \dots, n-1\}$ modulo $n$.
\end{definition}

Our main results are the following.
\begin{theorem}
	There are no multiplicative orthomorphisms of $\Zc{n}$
	except when $n = 2$.
	\label{thm:nomult}
\end{theorem}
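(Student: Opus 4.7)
The plan is to derive a contradiction from the existence of a multiplicative orthomorphism $\sigma$ of $\ZZ/n$ for $n \ge 3$, splitting on whether $n$ is squarefree. The main ingredient will be a structural observation: for every prime $p \mid n$, both $\sigma$ and the map $x \mapsto x\sigma(x) \bmod n$ restrict to bijections on $V_p \defeq \{x \in \{1,\dots,n-1\} : p \mid x\}$. Indeed, $p \mid x$ forces $p \mid x\sigma(x)$, and since $x\sigma(x) \not\equiv 0 \pmod n$ by hypothesis the reduced value lies in $V_p$; injectivity on a finite set upgrades this to a bijection. Repeating the argument on the complement $\{x : p \nmid x\}$ then forces $\sigma(V_p) = V_p$ as well.

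If $n$ is not squarefree, I will fix a prime $p$ with $p^2 \mid n$. For $x \in V_p$, both $x$ and $\sigma(x)$ lie in $V_p$, so $p^2 \mid x\sigma(x)$; since $p^2 \mid n$, this divisibility survives reduction modulo $n$. Thus the bijection $V_p \to V_p$ actually lands inside the set of nonzero multiples of $p^2$ in $\{1,\dots,n-1\}$, which has only $n/p^2 - 1 < n/p - 1 = |V_p|$ elements: a pigeonhole contradiction.

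If $n = p_1 \cdots p_k$ is squarefree (with $n > 2$, so some prime $p_i$ is odd), I will instead use $D = n/p_i$ and the subset $V_D = \{D, 2D, \dots, (p_i-1)D\} = \bigcap_{j \ne i} V_{p_j}$. Intersecting the bijections from the first paragraph, both $\sigma$ and $x \mapsto x\sigma(x)$ restrict to bijections on $V_D$, so comparing products yields
\[ \bigl[D^{p_i-1}(p_i-1)!\bigr]^2 \;\equiv\; D^{p_i-1}(p_i-1)! \pmod n. \]
Reducing modulo $p_i$, Fermat's little theorem gives $D^{p_i-1} \equiv 1$ and Wilson's theorem gives $(p_i-1)! \equiv -1$, so the congruence collapses to $1 \equiv -1 \pmod{p_i}$, contradicting the oddness of $p_i$.

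The main obstacle will be choosing the right subset in the squarefree case. The naive product argument over all of $\{1,\dots,n-1\}$, or over $V_p$ for a single prime $p$, becomes vacuous as soon as $n$ has enough composite divisors for the relevant factorial to vanish modulo $n$. The subset $V_{n/p}$ is the one that works because dividing by $D$ turns its product into $(p-1)!$, a full nonzero residue system modulo $p$ where Wilson's theorem finally bites.
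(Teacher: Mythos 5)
Your argument is correct and takes essentially the same route as the paper: it reduces to the squarefree case by a pigeonhole argument on divisibility by $p$ versus $p^2$ (the paper phrases this via the rank $R_n(x)=\gcd(x,n)$, you via the invariant sets $V_p$), and then compares products over the set of multiples of $n/p$, invoking Fermat and Wilson to rule out every odd prime divisor. The differences are only cosmetic, and I see no gaps.
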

\begin{theorem}
	There exists an exponential orthomorphism of $\Zc{n}$
	if and only if $n = 2$, $n = 3$, $n = 4$,
	or $n = 2p$, where $p$ is an odd prime such that
	\[ p - 1 = 2 q_1 q_2 \cdots q_k \]
	for distinct odd primes $q_1$, \dots, $q_k$.
	\label{thm:whichexp}
\end{theorem}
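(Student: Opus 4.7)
The plan is to prove both directions of the equivalence, using the CRT decomposition $\ZZ/n \cong \prod_i \ZZ/p_i^{a_i}$ throughout. The key observation is that bijectivity of $x \mapsto x^{\sigma(x)}$ on $\{1, \ldots, n-1\}$ forces the CRT zero-pattern of $x$ (which of the $p_i^{a_i}$ divide $x$) to be preserved, so $\sigma$ restricts to a bijection on each zero-pattern class.

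For necessity, I first rule out $n$ with a prime-square factor other than $n = 4$: if $p^a \| n$ with $a \ge 2$, elements $x$ with $v_p(x) = a - 1$ in the $p^a$-component satisfy $x^k \equiv 0 \pmod{p^a}$ for all $k \ge 2$, forcing $\sigma(x) = 1$; pigeonhole against multiple such elements gives a contradiction for all such $n$ except $n = 4$ (where only one such $x$ exists). Next, for $n = p$ prime, a discrete-log substitution via a primitive root rewrites the condition as a multiplicative orthomorphism of $\ZZ/(p-1)$, which Theorem~\ref{thm:nomult} rules out unless $p - 1 \le 2$, giving $n \in \{2, 3\}$. For squarefree $n$ with at least two odd prime factors (i.e., $n = pq$, $2pq$, $pqr$, etc.), a parity-counting argument applies: bijectivity on each zero-pattern class forces $\operatorname{ord}(x^{\sigma(x)}) = \operatorname{ord}(x)$, hence $\sigma(x)$ coprime to $\operatorname{ord}(x)$ and in particular odd when the order is even; summing over classes, the count of $x$ forced to have odd $\sigma(x)$ exceeds the number of odd values in $\{1, \ldots, n-1\}$.

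The remaining case is $n = 2p$ with $p$ odd prime. The inputs partition into the units $U$, the even non-multiples $E$ of $p$, and $\{p\}$, and $\sigma$ bijects each piece. Using a primitive root of $\ZZ/p$, the restrictions to $U$ and $E$ become functions $f_1, f_2 : \ZZ/(p-1) \to \ZZ/(p-1)$ with $c \mapsto c f_i(c)$ bijective for $i = 1, 2$, while the combined multiset $\{f_i(c)\}_{i, c}$ hits each residue modulo $p-1$ exactly twice. A refinement of the order-preservation gives $\gcd(f_i(c), (p-1)/\gcd(c, p-1)) = 1$; so for each prime $q \mid p - 1$ with $q^a \| p - 1$, any $c$ with $v_q(c) < a$ requires $q \nmid f_i(c)$. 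This forces $2(p-1)(1 - q^{-a})$ of the $f_i$-values to be coprime to $q$, while the double multiset provides only $2(p-1)(1 - q^{-1})$ such; hence $a \le 1$, so $p - 1$ is squarefree, of the form $2 q_1 \cdots q_k$.

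For sufficiency, the cases $n \in \{2, 3, 4\}$ admit direct construction. For $n = 2p$ with $p - 1 = 2 q_1 \cdots q_k$, I construct $f_1, f_2$ by CRT: $\ZZ/(p-1)$ is a product of fields $\ZZ/q$ for primes $q \mid p - 1$, so I can independently choose ``zone-preserving'' permutations $\pi_1, \pi_2$ of $\ZZ/(p-1)$ (meaning $\pi_i(c)$ has a zero coordinate exactly where $c$ does) and set $f_i(c) = c^{-1} \pi_i(c)$ coordinate-wise on the nonzero components. The tight equality in the necessity counting shows $\pi_1, \pi_2$ can be balanced so the combined $f_i$-multiset covers each residue exactly twice; lifting residues back to distinct elements of $\{1, \ldots, 2p-1\}$ and assigning the leftover value to $\sigma(p)$ completes the construction. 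The main obstacle is this construction step: balancing both bijectivity and the multiset constraint simultaneously requires the squarefreeness of $p - 1$, which makes each CRT factor a field and gives the independence needed to engineer the $f_i$-values appropriately.
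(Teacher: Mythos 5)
Your necessity direction is essentially sound and runs parallel to the paper's, with two repairable slips. First, in the non-squarefree case your pigeonhole class $\{x : v_p(x) = a-1\}$ has a \emph{single} element whenever $n = 2^a$, not only when $n = 4$: for $n = 8, 16, \dots$ the only such $x$ is $2^{a-1}$, so "multiple such elements" fails and you need a different class (the paper instead uses all $x$ with $\gcd(x,n) = p$ exactly, e.g.\ $p$ and $n-p$, which forces $\sigma(p) = \sigma(n-p) = 1$ and hence $n = 2p$ with $p^2 \mid n$, i.e.\ $n=4$). Second, in the squarefreeness-of-$p-1$ count your supply figure $2(p-1)(1-q^{-1})$ omits the one extra element of the exponent multiset (the paper's multiset is $\{1,1,1,2,2,\dots,p-1,p-1\}$, so the correct bound carries a $+1$); harmless, since the slack is at least $2(q-1) \ge 2$, but it should be stated. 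Your order-preservation/parity count replacing the paper's quadratic-residue count is a legitimate variant: both reduce to the same inequality $\prod_i \frac{p_i+1}{2} < \frac{n+1}{2}$ for $r \ge 2$ prime factors, yours on the demand side (elements forced to receive odd exponents) and the paper's on the supply side (even exponents forced into too few squares).

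The sufficiency direction, however, has a genuine gap: the sentence claiming $\pi_1, \pi_2$ "can be balanced so the combined $f_i$-multiset covers each residue exactly twice" asserts precisely the hard content of the theorem without proving it, and you have missed an obstruction that shows naive CRT independence cannot deliver it. Fix a rank class $\{c : \gcd(c, p-1) = d\}$ and set $e = (p-1)/d$; for \emph{any} permutation realization, the multiset $\{c^{-1}\pi_i(c)\}$ over that class automatically has product $1$ in $(\ZZ/e)^\times$. So the exponent multiset must first be (a) distributed among the rank classes compatibly with the coprimality constraints --- the paper does this with its divisor-poset algorithm, which can only move labels $\equiv 1 \pmod{e}$ up the lattice, and which also manages the three residue-$1$ elements $1, p, 2p-1$ and the leftover value for $\sigma(p)$; then (b) split, within each class, into two product-$1$ halves for the two copies $U$ and $E$ --- the paper's Lemma~\ref{lem:split}, proved by a character-sum count; and (c) each product-$1$ half must be shown realizable as $\{c^{-1}\pi(c)\}$ for a genuine permutation $\pi$, which is equivalent to writing an arbitrary zero-sum function on a finite abelian group as a sum of two permutations --- the paper's Lemma~\ref{lem:2005C7}, a nontrivial imported result, not a consequence of the CRT factors being fields. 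Your construction as stated gives bijectivity of $c \mapsto c f_i(c)$ for free but leaves the global multiset constraint entirely unaddressed (note also that the coordinates of $f_i(c)$ over the zero zones of $c$ are not determined by $\pi_i$ and must be filled consistently with that constraint). Without ingredients (a)--(c), or substitutes for them, the "if" direction is not proved.
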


\begin{theorem}
	If $p - 1 = 2q_1 \cdots q_k$ as described in the previous theorem,
	then the number of exponential orthomorphisms is at least
	\[ \frac{(k+2)! \cdot 3^{k+1} \cdot
		2^{n-2^{k-1}}}{4(n-2)^{3 \cdot 2^{k-1}}}. \]
	\label{thm:countexp}
\end{theorem}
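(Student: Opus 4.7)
My plan is to reduce the problem, via the Chinese Remainder Theorem, to a structural question on $\ZZ/(p-1)$, and then produce many valid $\sigma$ by combining independent local choices.

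Since $\ZZ/2p \cong \ZZ/2 \times \ZZ/p$ and $p^k \equiv p \pmod{2p}$ for every $k \ge 1$, the value $x = p$ is a fixed point of $x \mapsto x^{\sigma(x)}$ regardless of $\sigma(p)$. For $x \ne p$, the parity of $x^{\sigma(x)}$ equals that of $x$, so the bijection condition decouples into two independent bijections on the $p-1$ even elements and on the $p-1$ odd non-$p$ elements. After identifying each half with $\ZZ/(p-1)$ via reduction mod $p$ and the discrete logarithm with a primitive root $g$ of $p$, each condition reads: a function $\tau : \ZZ/(p-1) \to \ZZ/(p-1)$ must satisfy that $a \mapsto a\tau(a)$ is a bijection of $\ZZ/(p-1)$. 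Call the two resulting functions $\tau_e$ and $\tau_o$.

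Given a valid pair $(\tau_e, \tau_o)$, a direct fiber count yields $3 \cdot 2^{p-1}$ realizing permutations $\sigma$: the set $\{1, \dots, 2p-1\}$ has three elements in the residue class $1 \bmod (p-1)$ (namely $1$, $p$, $2p-1$) and two in every other class, so after matching within each class the count is $3! \cdot (2!)^{p-2}$, provided the combined image multiset of $\tau_e$ and $\tau_o$ matches these fiber sizes up to a single off-by-one absorbed by $\sigma(p)$. The task thus reduces to lower-bounding the number of valid pairs $(\tau_e, \tau_o)$.

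For this, apply CRT once more: $\ZZ/(p-1) \cong \ZZ/2 \times \prod_{i=1}^k \ZZ/q_i$. Decompose each $\tau$ by the support $S(a) \subseteq \{0, 1, \dots, k\}$ of $a$ (the set of CRT coordinates on which $a$ is nonzero). A direct analysis shows that $\tau(a)$ must be nonzero on every coordinate in $S(a)$ and must induce a bijection of the multiplicative group $U_T := \prod_{i \in T}(\ZZ/p_i)^*$ on each support class $T$ (where $p_0 := 2$ and $p_i := q_i$), while the coordinates of $\tau(a)$ outside $S(a)$ are completely free; within each class, any bijection $\psi$ of $U_T$ arises from some valid restriction via $\tau(a) = a^{-1}\psi(a)$. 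Building $\tau_e$ and $\tau_o$ from such per-class bijections together with free outer coordinates, then adjusting to meet the combined-image balance, should produce a family of pairs matching the claimed bound.

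The principal obstacle is this final coupling step: the per-class bijection conditions are abundantly easy, but aligning the \emph{combined} image distributions of $\tau_e$ and $\tau_o$ globally is restrictive. The obstruction is most acute on ``small'' support classes $T$ with $|T \cap \{1, \dots, k\}| \le 1$, where $U_T$ has cyclic Sylow $2$-subgroup and the Hall--Paige criterion forbids any multiplicative orthomorphism of $U_T$; I expect $2^{k-1}$ to count some invariant of these exceptional classes, with $(n-2)^{3 \cdot 2^{k-1}}$ in the denominator recording the cost of balancing their residues and $(k+2)! \cdot 3^{k+1}$ in the numerator recording the permutational and local freedom of the resulting construction.
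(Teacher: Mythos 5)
Your reduction is the right one, and in fact it matches the paper's setup: the element $p$ is free since $p^c \equiv p \pmod{2p}$, discrete logarithms turn the two parity classes into the condition that $a \mapsto a\tau(a)$ permute $\ZZ/(p-1)$ for two functions $\tau_e, \tau_o$, and your CRT support classes are exactly the paper's decomposition by $d = \gcd(a, p-1)$ over the Boolean lattice of divisors of $p-1$. But everything after that is deferred, and what is deferred is the theorem. The constraint you call the ``final coupling step'' is that the combined values of $\tau_e$, $\tau_o$ and $\sigma(p)$ must reproduce, mod $p-1$, the fixed multiset $S = \{1,1,1,2,2,\dots,p-1,p-1\}$; consequently, within a support class $T$ you may not realize an arbitrary bijection $\psi$ of $U_T$ via $\tau(a) = a^{-1}\psi(a)$ --- the exponents available to that class form (once the off-by-one is routed somewhere) two copies of $U_T$, to be split between the even and the odd half. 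Taking the product of $a\tau(a)$ over a class (a permutation of that class) shows each half's multiset of exponents must have product $1$ in $U_T$. The paper's proof consists of exactly the two tools that make this work, neither of which appears in your proposal: a character-sum splitting lemma (Lemma~\ref{lem:split}) counting the product-one halves of $G \sqcup G$, at least $4^N/(2(N+1)^{3/2})$ of them, and the decomposition of any zero-sum function on a finite abelian group as a sum of two permutations (Lemma~\ref{lem:2005C7}), which converts each chosen half into at least one valid assignment on that half. Multiplying the splitting counts over all $2^{k+1}$ divisors is the sole source of the factor $2^{n-2^{k-1}}/\bigl(4(n-2)^{3\cdot 2^{k-1}}\bigr)$; and the off-by-one is handled by percolating one of the three labels $\equiv 1$ up a maximal chain of the divisor lattice, which is where $(k+2)! \cdot 3^{k+1}$ comes from --- not from ``permutational and local freedom'' at exceptional classes.

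Your Hall--Paige speculation also misdiagnoses the structure. Hall--Paige would be relevant if the exponents assigned to a class had to form a single full copy of $U_T$; they do not. The only necessary condition is product one on each half, and since $\bigl(\prod_{u \in U_T} u\bigr)^2 = 1$ always holds, the cyclic-Sylow-$2$ obstruction $\prod_u u \neq 1$ is neutralized the moment the doubled multiset is split into two product-one halves --- which the splitting lemma shows can be done in exponentially many ways, for every class. So no class is exceptional, and no invariant of ``small'' classes enters the bound. As written, the proposal establishes neither the existence of a single exponential orthomorphism (the converse direction of Theorem~\ref{thm:whichexp}) nor any quantitative lower bound; your final paragraph concedes rather than closes the gap.
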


The rest of the paper is structured as follows.
We prove Theorem~\ref{thm:nomult} in Section~\ref{sec:nomult}.
In Section~\ref{sec:expreduce} we show that exponential
orthomorphisms only exist in the conditions described in
Theorem~\ref{thm:whichexp},
and then in Section~\ref{sec:construct} we prove Theorem~\ref{thm:countexp}
(which implies the other direction of Theorem~\ref{thm:whichexp}).

\section{No multiplicative orthomorphisms exist for $n > 2$}
\label{sec:nomult}
Throughout this section, $n \ge 2$ is a fixed integer,
and $\sigma : \{1, \dots, n-1\} \to \{1, \dots, n-1\}$
is a multiplicative orthomorphism.
Our aim is to show $n = 2$.

We first provide the following definition.
\begin{definition}
	Given $x \in \Zc{n}$,
	we define the \emph{rank} $R_n(x) = \gcd(x, n)$.
\end{definition}
We observe that $R_n(ab) \ge \max \left\{ R_n(a), R_n(b) \right\}$.
In particular, $R_n(x\sigma(x)) \ge \max\left\{ \sigma(x), x \right\}$.
However, the sequences $x$, $\sigma(x)$, $x\sigma(x)$
are supposed to be permutations of each other,
and in particular they have the same multisets of ranks.
Therefore this is only possible if
\[ R_n \left( x \sigma(x) \right) = R_n(x) = R_n(\sigma(x)) \]
for every $x$.

With this, we may begin by proving:
\begin{proposition}
	The number $n$ must be squarefree.
	\label{prop:mult_squarefree}
\end{proposition}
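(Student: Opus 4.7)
The plan is a short proof by contradiction. Suppose some prime $p$ satisfies $p^2 \mid n$; I will derive a contradiction by evaluating the rank identity
\[ R_n(x\sigma(x)) = R_n(x) = R_n(\sigma(x)) \]
observed above at a carefully chosen $x$ of rank $p$.

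The first step is to repackage the identity as a clean necessary condition: for every divisor $d$ of $n$ that appears as a rank of some $x \in \{1, \dots, n-1\}$, one must have $\gcd(d, n/d) = 1$. To see this, fix such an $x$ and write $d = R_n(x) = R_n(\sigma(x))$, so $x = da$ and $\sigma(x) = db$ with both $a, b$ coprime to $n/d$. Then $x\sigma(x) = d^2 ab$, and since $ab$ is a unit mod $n/d$, a prime-by-prime valuation comparison gives
\[ R_n(x\sigma(x)) \;=\; \gcd(d^2 ab,\, n) \;=\; d \cdot \gcd(d,\, n/d). \]
Demanding this equal $d$ yields the coprimality assertion.

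The contradiction is then immediate. The element $x = p$ lies in $\{1, \dots, n-1\}$ (since $p^2 \mid n$ forces $p < n$) and has rank exactly $p$, so $d = p$ is a valid choice above. But $p^2 \mid n$ implies $p \mid n/p$, so $\gcd(p, n/p) \ge p > 1$, violating the condition just derived. I do not expect any real obstacle: the only computational content is the valuation identity $R_n(d^2 ab) = d \cdot \gcd(d, n/d)$ when $ab$ is coprime to $n/d$, which is entirely routine.
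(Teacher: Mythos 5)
Your proof is correct, but it takes a genuinely different route from the paper. The paper argues globally by counting: if $q^2 \mid n$, then every $x$ whose $q$-adic valuation is $0$ or $1$ forces $q \nmid \sigma(x)$ (else the rank of $x\sigma(x)$ would strictly exceed that of $x$), and there are $\tfrac{q^2-1}{q^2}n$ such $x$ but only $\tfrac{q-1}{q}n$ available values coprime to $q$ — a pigeonhole contradiction. You instead extract a pointwise consequence of the rank identity, namely the clean valuation formula $R_n(x\sigma(x)) = d\cdot\gcd(d, n/d)$ when $R_n(x)=R_n(\sigma(x))=d$ (which I have checked prime by prime and is correct, using that $a$ and $b$ are units modulo $n/d$), and then get a contradiction from the single element $x=p$. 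Your argument is shorter and more local — in fact for $x=p$ it reduces to the observation that $p\nmid ab$ forces $v_p(x\sigma(x))=2\le v_p(n)$, so $R_n(x\sigma(x))\ge p^2>p$. One thing the paper's counting formulation buys is reusability: the analogous squarefreeness claim for $p-1$ in Section~\ref{sec:expreduce} concerns maps $k\mapsto ka_k$, $k\mapsto kb_k$ where $a_\bullet, b_\bullet$ land in a multiset with repetitions and a leftover element, so one no longer has $R(a_k)=R(k)$ pointwise and a single-element evaluation is not enough; the counting argument transfers with only small modifications, as the paper notes. For the present proposition, though, your local argument is complete and arguably cleaner.
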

\begin{proof}
	Assume $q$ is a prime with $q^2 \mid n$.
	Then consider elements $x \in \Zc{n}$
	for which the exponent of $q$ in $x$ is either $0$ or $1$;
	observe that there exist $\frac{q^2-1}{q^2} n$ such $x$.
	For those elements, we necessarily have $q \nmid \sigma(x)$,
	otherwise $R_n(x\sigma(x)) \ge q R_n(x) > R_n(x)$, which is a contradiction.

	Thus at least $\frac{q^2-1}{q^2} n$ of the $\sigma(x)$'s
	need to be not divisible by $q$.
	But $\sigma$ is a permutation of $\{1, \dots, n-1 \}$,
	which only has $\frac{q-1}{q} n$ elements not divisible by $q$,
	giving a contradiction.
\end{proof}

Let $q$ now be any prime divisor of $n$,
and let $m = n/q$.
Since $n$ is squarefree we have $\gcd(m, q) = 1$.
Consider the set $S$ consisting of the $q-1$ elements of rank $m$,
namely \[ S = \{m, 2m, \dots, (q-1)m \}. \]
Then $\sigma(x)$ and $x\sigma(x)$ both induce permutations on $S$,
and therefore we have
\[  \left( \prod_{i=1}^{q-1} im  \right)^2
	\equiv \prod_{i=1}^{q-1} im \cdot \sigma(im)
	\equiv \prod_{i=1}^{q-1} im \pmod{n}. \]
As $q$ divides $n$ we conclude
$\left( \prod_{i=1}^{q-1} im \right)^2 \equiv \prod_{i=1}^{q-1} im \pmod{q}$,
Since $\gcd(im,q) = 1$ for $1 \le i \le q-1$,
we finally conclude
\[ 1 \equiv \prod_{i=1}^{q-1} im = (q-1)! \cdot m^{q-1} \pmod q. \]
By Fermat's little theorem we know $m^{q-1} \equiv 1 \pmod q$.
On the other hand, $(q-1)! \equiv -1 \pmod q$ by Wilson's theorem.
Consequently, we conclude $-1 \equiv 1 \pmod q$,
and therefore $q = 2$.

Since $q$ was any prime dividing $n$, and $n$ is squarefree,
we conclude $n = 2$ is the only possible value.

\section{Characterizing $n$ for exponential orthomorphisms}
\label{sec:expreduce}
In this section our aim is to show that
if $\sigma$ is an exponential orthomorphism modulo $n$,
then $n$ has the form described in Theorem~\ref{thm:whichexp}.

Fix $n \ge 3$ an integer
and $\sigma$ an exponential orthomorphism on $\{1, \dots, n-1\}$.

\begin{proposition}
	If $n$ is not squarefree, then $n = 4$.
\end{proposition}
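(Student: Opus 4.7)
The plan is to define a truncated $q$-adic valuation on $\ZZ/n$ that must be preserved by the bijection $x \mapsto x^{\sigma(x)} \bmod n$, and then to count how many elements this preservation forces into $\sigma^{-1}(1)$.

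Write $n = q^b m$ with $q$ prime, $\gcd(q, m) = 1$, and $b \geq 2$. For $y \in \{1, \dots, n-1\}$, let $\tilde v_q(y) \defeq \min(v_q(y), b)$, where $v_q$ denotes the ordinary $q$-adic valuation of integers. First I would show that $\tilde v_q$ weakly increases under $y \mapsto y^c \bmod n$ for any $c \geq 1$: raising to the $c$-th power multiplies $v_q$ by $c$, and since reduction modulo $n$ only adjusts by integers of $q$-valuation $\geq b$, one has $v_q(y^c \bmod n) = v_q(y^c)$ when $v_q(y^c) < b$, and $v_q(y^c \bmod n) \geq b$ otherwise. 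In either case $\tilde v_q(y^c \bmod n) \geq \tilde v_q(y)$. Since $x \mapsto x^{\sigma(x)} \bmod n$ is a bijection on $\{1, \dots, n-1\}$, a pointwise-increasing statistic must in fact be preserved exactly.

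I would then apply this preservation to any $x$ with $1 \leq v_q(x) \leq b - 1$, so that $\tilde v_q(x) = v_q(x) < b$. From the computation above, if $\sigma(x) v_q(x) < b$ then $\tilde v_q(x^{\sigma(x)} \bmod n) = \sigma(x) v_q(x)$, which combined with preservation gives $\sigma(x) = 1$; while if $\sigma(x) v_q(x) \geq b$ then $\tilde v_q(x^{\sigma(x)} \bmod n)$ jumps to $b > v_q(x)$, a contradiction. Thus $\sigma(x) = 1$ for every such $x$, and since a permutation has at most one preimage of $1$, the number of such $x$ must be at most $1$.

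A direct count shows there are $n/q - n/q^b = m(q^{b-1} - 1)$ elements of $\{1,\dots,n-1\}$ with $1 \leq v_q(x) \leq b - 1$. The inequality $m(q^{b-1} - 1) \leq 1$, under the constraints $q, b \geq 2$ and $m \geq 1$, forces $m = 1$, $q = 2$, $b = 2$, i.e., $n = 4$. The only step requiring real care is the weakly-increasing claim, which hinges on the truncation at level $b$ being chosen precisely so that $v_q$ is preserved below $b$ and saturated above it; everything else is bookkeeping.
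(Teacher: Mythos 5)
Your proof is correct and follows essentially the same strategy as the paper: a statistic measuring divisibility by $q$ (the paper uses the rank $R_n(x)=\gcd(x,n)$, you use a truncated $q$-adic valuation, which carries the same information) is weakly increasing under $x\mapsto x^{\sigma(x)}$, hence exactly preserved by the bijection, forcing $\sigma(x)=1$ on too many elements. The only cosmetic difference is in the last step, where the paper derives the contradiction from just the two elements $p$ and $n-p$ while you count all elements of intermediate valuation; both yield $n=4$.
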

\begin{proof}
	As before, we note that
	\[ R_n(x^e) \ge R_n(x) \]
	for each $x \in \Zc{n}$ and $e \in \ZZ_{>0}$.
	In particular, $R_n(x^{\sigma(x)}) \ge R_n(x)$.
	Again since $x^{\sigma(x)}$ and $x$ are permutations of each other
	we must have $R_n(x^{\sigma(x)}) = R_n(x)$ for each $x$.

	Now suppose $p$ is a prime with $p^2$ dividing $n$.
	Let $x$ be any element of $\Zc{n}$
	for which $\gcd(x, n) = p$.
	Since $R_n(x^{\sigma(x)}) > R_n(x)$ if $\sigma(x) > 1$
	we must instead have $\sigma(x) = 1$.

	In particular $\sigma(p) = \sigma(n-p) = 1$.
	This is only possible if $p = n-p$, i.e., $n = 2p$.
	Since we assumed $p^2 \mid n$, this means $p=2$ and $n=4$.
\end{proof}

Thus, we henceforth assume $n$ is a product of distinct primes.
\begin{proposition}
	If $n$ is squarefree,
	then it is either prime, or twice a prime.
 \end{proposition}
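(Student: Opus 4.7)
The strategy is to apply character sums rank-by-rank and count the resulting parity constraints on $\sigma$.

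First, set up the rank-by-rank bijections. For each proper divisor $d \mid n$, rank preservation forces $x \mapsto x^{\sigma(x)}$ to restrict to a bijection of the rank-$d$ set $\mathcal{R}_d = \{x : R_n(x) = d\}$ of size $\phi(n/d)$. Writing $x = dy$ with $y$ coprime to $n/d$, the restriction becomes $y \mapsto y'$ where $y' \equiv d^{\sigma(x)-1}y^{\sigma(x)} \pmod{n/d}$.

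Next, for each odd prime $p \mid n/d$, apply the Legendre symbol $\chi_p$. Since $y \mapsto y'$ is a bijection on coprime residues modulo $n/d$, we have $\sum_y \chi_p(y') = 0$. Splitting the $y$'s by the parity of $\sigma(dy)$ into sets $A_d$ (even) and $B_d$ (odd) gives
\[ 0 = |A_d|\,\chi_p(d) + \sum_{y \in B_d} \chi_p(y), \]
and combining with $\sum_y \chi_p(y) = 0$ forces $\chi_p(y) = \chi_p(d)$ for every $y \in A_d$. Intersecting these conditions over all odd primes $p \mid n/d$ via CRT, the set $A_d$ lies in a single coset of the squares subgroup of $(\ZZ/(n/d))^\times$, whence $|A_d| \le \phi(n/d)/2^{s(n/d)}$, where $s(m)$ denotes the number of odd primes dividing $m$.

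Finally, sum the bound $|B_d| \ge \phi(n/d)\bigl(1 - 2^{-s(n/d)}\bigr)$ over $d \mid n$. By multiplicativity of $e \mapsto \phi(e) \cdot 2^{-s(e)}$, the total equals $n - \prod_{p \mid n} C_p$ with $C_2 = 2$ and $C_p = (p+1)/2$ for odd $p$. On the other hand, $\sigma$ permutes $\{1, \dots, n-1\}$, so $\sum_d |B_d| = \#\{x : \sigma(x) \text{ odd}\} = \lfloor n/2 \rfloor$. Combining and rearranging yields
\[ 2^{s-1} \le \prod_{\substack{p \mid n \\ p\ \text{odd}}} (1 + 1/p), \]
where $s$ is the number of odd primes dividing $n$. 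Each factor on the right is at most $4/3$, so for $s \ge 2$ the inequality fails; thus $s \le 1$, and being squarefree, $n \in \{2, p, 2p\}$.

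I expect the main obstacle to be the character-sum step: the sign tracking must conclude that $\chi_p$ is \emph{constant} on $A_d$ (not merely that its sum vanishes), which hinges on how $\chi_p(d)^{\sigma(x)-1}$ flips with the parity of $\sigma(x)$. The remaining steps are bookkeeping through the multiplicative identity $\sum_{e \mid n} \phi(e) \cdot 2^{-s(e)} = \prod_p C_p$ and the comparison with $\lfloor n/2 \rfloor$.
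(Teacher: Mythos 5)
Your proof is correct, but it reaches the paper's conclusion through noticeably different machinery, so a comparison is worthwhile. The paper's proof is a two-line global count: whenever $\sigma(x)$ is even, $x^{\sigma(x)}$ is a square modulo $n$, and since $x \mapsto x^{\sigma(x)}$ is injective, the number of nonzero squares modulo squarefree $n$ (which is $\prod_i \frac{p_i+1}{2} - 1$ for odd $n$, resp.\ $2\prod_i \frac{p_i+1}{2} - 1$ for even $n$, by CRT) must be at least the number of even values in $\{1,\dots,n-1\}$; for two or more odd prime factors this fails. You instead work rank class by rank class: the bijection $y \mapsto d^{\sigma(x)-1}y^{\sigma(x)}$ on $(\ZZ/(n/d))^\times$, orthogonality of each lifted Legendre character $\chi_p$, and the rigidity step $\sum_{y \in A_d} \chi_p(y) = |A_d|\,\chi_p(d)$ (a sum of $|A_d|$ signs equaling $\pm|A_d|$ forces constancy) show that the even-exponent fiber $A_d$ lies in a \emph{single} coset of the squares, giving $|A_d| \le \phi(n/d)/2^{s(n/d)}$. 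Your multiplicative identity $\sum_{e \mid n} \phi(e)\,2^{-s(e)} = \prod_p C_p$ is then precisely the CRT count of squares modulo squarefree $n$ decomposed by rank, so your final inequality $\prod_p C_p \ge \lceil n/2 \rceil$ is literally the paper's inequality read in reverse, and your endgame ($1 + 1/p \le 4/3$, so $\prod(1+1/p) \le (4/3)^s < 2^{s-1}$ for $s \ge 2$) matches the paper's algebraic check. What your route buys is a strictly stronger local structure statement --- each $A_d$ occupies one square-class coset, not merely that its images are squares --- which the proposition does not actually need; what it costs is length and a dependence on rank preservation $R_n(x^{\sigma(x)}) = R_n(x)$, which you assert without proof. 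That assertion is fine in context (it follows from $R_n(x^e) \ge R_n(x)$ plus the permutation/multiset argument the paper gives immediately before this proposition), but in a standalone write-up you should include that half-paragraph. All the individual steps check out, including the parity bookkeeping $\sum_d |B_d| = \lfloor n/2 \rfloor$ and the edge cases $n/d \in \{1,2\}$ where there are no odd-prime constraints.
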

\begin{proof}
	First, suppose $n = p_1 p_2 \dots p_r$ is odd,
	where $p_1 < p_2 < \dots < p_r$ are distinct primes.
	We observe that if $r > 1$ we have
	\[ \prod_i \left( \frac{p_i+1}{2} \right) - 1 < \frac{n-1}{2}. \]
	(Indeed, we note that $\frac{p_1+1}{2} \cdot \frac{p_2+1}{2} < \frac12 p_1p_2$
	rearranges to $(p_1-1)(p_2-1) > 2$,
	and then simply use $\frac{p_i+1}{2} \le p_i$ for $i \ge 3$.)

	But the left-hand side is the number of nonzero quadratic residues
	in $\Zc{n}$ while the right-hand is the number of even elements
	in $\{1, \dots, n-1\}$.
	This is a contradiction since whenever $\sigma(x)$ is even
	the number $x^{\sigma(x)}$ is a quadratic residue,
	implying that there are at least as many quadratic residues as even numbers.

	In exactly the same way, if $n = 2 p_1 \cdots p_r$ is even
	and $r > 1$, then we obtain
	\[ 2\prod_i \left( \frac{p_i+1}{2} \right) - 1 < \frac n2 \]
	which is a contradiction in the same way.
\end{proof}

We now handle the prime case.
\begin{proposition}
	The number $n$ cannot be prime unless $n = 3$.
	\label{prop:primroot}
\end{proposition}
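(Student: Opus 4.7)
The plan is to exploit the cyclic structure of $(\ZZ/p)^\times$. Fix a primitive root $g$ modulo $p$ and write $L : \{1,\dots,p-1\} \to \{0,\dots,p-2\}$ for the discrete logarithm, so that $L(x^{\sigma(x)}) \equiv L(x)\sigma(x) \pmod{p-1}$. For each prime $\ell$ dividing $p-1$, an element $y$ is an $\ell$-th power residue precisely when $\ell \mid L(y)$, so $x^{\sigma(x)}$ is an $\ell$-th power iff $\ell \mid L(x)$ or $\ell \mid \sigma(x)$. Since $|\{x : \ell \mid L(x)\}| = (p-1)/\ell = |\{x : \ell \mid \sigma(x)\}|$, inclusion--exclusion combined with the bijectivity of $x \mapsto x^{\sigma(x)}$ forces $\sigma$ to map the $\ell$-th power residues bijectively onto the multiples of $\ell$ in $\{1,\dots,p-1\}$.

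With this tool I would first rule out $p \equiv 1 \pmod 4$: the element $-1$ must satisfy $\sigma(-1)$ odd (otherwise $(-1)^{\sigma(-1)} = 1 = 1^{\sigma(1)}$ contradicts injectivity of $x \mapsto x^{\sigma(x)}$), but if $-1$ is a quadratic residue then the $\ell=2$ case above forces $\sigma(-1)$ to be even. So $p - 1 = 2m$ with $m$ odd. Next I would show $p-1$ is squarefree: if some prime $\ell$ has $\ell^2 \mid p-1$, then for every one of the $(p-1)/\ell$ many $\ell$-th powers $x$ both $L(x)$ and $\sigma(x)$ are multiples of $\ell$, so $\ell^2 \mid L(x)\sigma(x)$ and $x^{\sigma(x)}$ is in fact an $\ell^2$-th power. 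This yields $(p-1)/\ell$ distinct $\ell^2$-th powers in the image when only $(p-1)/\ell^2$ exist, a contradiction.

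The final step is a reduction to Theorem~\ref{thm:nomult}. Because $1$ is an $\ell$-th power for every prime $\ell \mid p-1$, squarefreeness of $p-1$ forces $\sigma(1) = p - 1$. Parameterizing the quadratic residues as $g^{2k}$ for $k \in \{0,\dots,m-1\}$ and writing $\sigma(g^{2k}) = 2\tilde\sigma(k)$ with $\tilde\sigma : \{0,\dots,m-1\} \to \{1,\dots,m\}$ a bijection (note $\tilde\sigma(0) = m$), the restriction of $x \mapsto x^{\sigma(x)}$ to the quadratic residues translates into the requirement that $k \mapsto 2k \tilde\sigma(k) \bmod m$ is a bijection of $\ZZ/m$; since $\gcd(2,m) = 1$ this is equivalent to $k \mapsto k \tilde\sigma(k) \bmod m$ being a bijection. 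Reducing $\tilde\sigma$ modulo $m$ then yields a permutation of $\ZZ/m$ fixing $0$, whose restriction to $\{1,\dots,m-1\}$ is exactly a multiplicative orthomorphism of $\ZZ/m$ in the paper's sense. Theorem~\ref{thm:nomult} now forces $m \leq 2$, and since $m$ is odd we conclude $m = 1$ and hence $p = 3$.

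The main obstacle will be the bookkeeping in the final paragraph: one must carefully verify that the mod-$m$ reduction of $\tilde\sigma$ is a well-defined permutation fixing $0$ and that the derived bijectivity condition on $k \mapsto k \tilde\sigma(k) \bmod m$ matches the definition of multiplicative orthomorphism exactly. The squarefreeness and residue-counting steps are elementary once the discrete-log viewpoint is in place.
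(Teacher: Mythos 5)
Your proof is correct, and it shares the paper's skeleton---pass to discrete logarithms and reduce to Theorem~\ref{thm:nomult}---but your execution is genuinely more roundabout, so a comparison is worthwhile. The paper's proof is a two-step reduction: injectivity of $x \mapsto x^{\sigma(x)}$ forces $\sigma(1) = n-1$ directly (if $\sigma(1) \neq n-1$, then $\left(\sigma\inv(n-1)\right)^{n-1} \equiv 1 \equiv 1^{\sigma(1)} \pmod n$ gives two preimages of $1$), and then the induced map $\tilde\sigma$ on \emph{all} of $\ZZ/(n-1)$, which fixes $0$ and permutes $\{1,\dots,n-2\}$, is itself a multiplicative orthomorphism of $\ZZ/(n-1)$; Theorem~\ref{thm:nomult} then yields $n-1=2$ at once, with no parity or squarefreeness considerations needed. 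You instead restrict to the subgroup of quadratic residues and apply Theorem~\ref{thm:nomult} modulo $m = (p-1)/2$, which obliges you to establish three preliminary facts (the $\ell$-th power residue counting lemma, $p \equiv 3 \pmod 4$, and squarefreeness of $p-1$) that the full-group reduction renders unnecessary; note that the oddness of $m$ is genuinely load-bearing in your route, since a multiplicative orthomorphism does exist for $m=2$, whereas the paper's modulus $p-1$ is killed for every value above $2$. Your individual steps all check out: the inclusion--exclusion argument correctly forces $\{x : \ell \mid L(x)\} = \{x : \ell \mid \sigma(x)\}$ (both sets have size $(p-1)/\ell$ and their union must too), the injectivity count of $\ell^2$-th powers is sound, and the passage from $k \mapsto 2k\tilde\sigma(k)$ to $k \mapsto k\tilde\sigma(k)$ via $\gcd(2,m)=1$ is valid, as is the bookkeeping showing $\tilde\sigma \bmod m$ fixes $0$ and permutes $\{1,\dots,m-1\}$. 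Your squarefreeness step is even a pleasant bonus---it is a multiplicative analogue of the paper's rank arguments and foreshadows the later proposition that $p-1$ must be squarefree when $n = 2p$---but as a proof of this proposition alone, your derivation of $\sigma(1) = p-1$ through squarefreeness is much heavier than the one-line injectivity argument, and the detour through quadratic residues costs roughly a page where the paper's direct reduction costs three lines.
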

\begin{proof}
	Let $n$ be a prime.
	Fix an isomorphism $\theta : (\Zc{n})^\times \to \Zc{(n-1)}$
	given by taking a primitive root $g$ of $\Zc{n}$
	such that $g^{\theta(x)} \equiv x \pmod n$ for $x \in (\Zc{n})^\times$.
	This gives us a diagram
	\begin{center}
	\begin{tikzcd}
		(\Zc{n})^\times \ar[d, "\theta"] \ar[r, "\sigma"] & \{1, \dots, n-1\} \\
		\Zc{(n-1)} \ar[ru, dashed, swap, "\tilde{\sigma}"] & 
	\end{tikzcd}
	\end{center}
	where we have a natural map
	$\tilde{\sigma} : \Zc{(n-1)} \to \{1, \dots, n-1\}$
	which makes the diagram commute.

	Obviously $\sigma(1) = n-1$,
	since otherwise $1 = 1^{\sigma(1)} = (\sigma\inv(n-1))^{n-1}$.
	As $\theta(1) = 0$, we conclude $\tilde{\sigma}(0) = n-1$.
	Looking at the remaining elements,
	$\tilde{\sigma}$ induces a multiplicative orthomorphism on $\Zc{(n-1)}$,
	which we know is only possible if $n-1=2$.
	Hence we conclude $n = 3$.
\end{proof}

Thus we may henceforth assume that $n = 2p$, where $p$ is prime.
We may as well assume $p$ is odd.
Then in $\Zc{2p}$ there are three types of nonzero elements:
\begin{itemize}
	\ii The odd numbers
	$O = \{1, 3, \dots, p-1, p+1, \dots, 2p-1\}$
	(of rank $1$).
	These remain odd under exponentiation,
	and as a multiplicative group is
	isomorphic $(\Zc{2p})^\times \cong (\Zc{p})^\times \cong \Zc{p-1}$.
	\ii The even numbers
	$E = \{2, \dots, 2p-2 \}$
	(of rank $2$).
	These remain even under exponentiation,
	and as a multiplicative group is isomorphic
	$(\Zc{p})^\times$ as well.
	\ii The special element $p$ (of rank $p$),
	for which $p^c \equiv p \pmod{2p}$ for any $c \in \ZZ$.
\end{itemize}
As all the elements above have order dividing $p-1$,
we may consider the image of $\sigma$ modulo $p-1$
to obtain the multiset
\[ S = \left\{ 1,1,1,2,2,3,3,\dots,p-1,p-1 \right\} \]
of size $n-1 = 2p-1$.
In other words, we may instead consider
$\sigma : \{1, \dots, n-1\} \to S$.
Thus, for $k = 1, \dots, p-1$ viewed as elements of $(\Zc{p})^\times$,
we define
\begin{align*}
	a_k &= \begin{cases}
		\sigma(2k-1) & k \le \frac{p-1}{2} \\
		\sigma(2k+1) & k \ge \frac{p+1}{2}
	\end{cases} \\
	b_k &= \sigma(2k) \\
	c &= \sigma(p).
\end{align*}
Diagramatically,
\begin{center}
\begin{tikzcd}
	O \sqcup E \ar[r, "\sigma"] \ar[d, swap, "\simeq"] & S \\
	(\Zc{p})^\times \sqcup (\Zc{p})^\times
		\ar[ru, "(a_\bullet{,} b_\bullet)", swap] &
\end{tikzcd}
\end{center}
Thus, we have reformulated the problem as follows:
\begin{proposition}
	Assume $n = 2p$ with $p$ an odd prime.
	Then $n$ satisfies the problem conditions
	if and only if there exists a permutation
	\[ (a_1, \dots, a_{p-1}, b_1, \dots, b_{p-1}, c)
		\quad\text{of}\quad S \]
	such that
	\[ (a_1, 2a_2, \dots, (p-1)a_{p-1})
		\quad\text{and}\quad (b_1, 2b_2, \dots, (p-1)b_{p-1}) \]
	are permutations of $\Zc{(p-1)}$.
\end{proposition}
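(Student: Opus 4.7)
The plan is to translate the exponential-orthomorphism condition through the isomorphisms set up in the preceding discussion; the proposition is essentially a bookkeeping restatement, so the argument is straightforward once the identifications are made precise. The fundamental observation is that each of $O$, $E$, and $\{p\}$ is stable under $x \mapsto x^e$ for every $e \ge 1$: odd elements stay odd, elements of rank $2$ stay so (they remain even and coprime to $p$), and $p^e \equiv p \pmod{2p}$ for all $e \ge 1$. Therefore $x \mapsto x^{\sigma(x)}$ must restrict to bijections on each of these three pieces. Moreover, for $x \in O \cup E$ the value $x^{\sigma(x)}$ depends only on $\sigma(x) \pmod{p-1}$, since every element has multiplicative order (in its respective structure) dividing $p-1$.

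For the forward direction, fix $\sigma$ an exponential orthomorphism and define $a_k$, $b_k$, $c$ as in the excerpt. A direct count of residues of $\{1, \dots, 2p-1\}$ modulo $p-1$ shows that the residue $1$ is attained by $1$, $p$, and $2p-1$ while each other residue $r \in \{2, \dots, p-1\}$ is attained exactly by $r$ and $r + (p-1)$; this recovers $S$. Hence $(a_\bullet, b_\bullet, c)$ is automatically a permutation of $S$ whenever $\sigma$ is a permutation of $\{1, \dots, 2p-1\}$. For the bijection conditions, fix a primitive root $g$ of $p$ and use the discrete-log isomorphism $\ell \colon (\ZZ/p)^\times \to \ZZ/(p-1)$. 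Under $\ell$, the bijection $k \mapsto k^{a_k}$ on $(\ZZ/p)^\times$ becomes $\ell(k) \mapsto \ell(k) \cdot a_k$ on $\ZZ/(p-1)$, which is a permutation precisely when $(\ell(k) a_k)_k$ enumerates $\ZZ/(p-1)$. Since the proposition silently identifies $k \in (\ZZ/p)^\times$ with $\ell(k) \in \ZZ/(p-1)$ via this isomorphism, this is exactly the stated condition on the $a_k$. The same argument applied to $E$ yields the condition on the $b_k$.

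For the backward direction, suppose $(a_\bullet, b_\bullet, c)$ is a permutation of $S$ with the two bijection properties. Lift the data to an honest $\sigma$ by choosing, for each residue $r$ mod $p-1$, any bijection between the positions in $(a_\bullet, b_\bullet, c)$ labeled $r$ and the integers in $\{1, \dots, 2p-1\}$ reducing to $r$; the two multisets have matching multiplicities, so such a lift exists and produces a permutation $\sigma$ of $\{1, \dots, 2p-1\}$. Since $x \mapsto x^{\sigma(x)}$ depends only on $\sigma(x) \pmod{p-1}$ on $O \cup E$ and fixes $p$, the two permutation hypotheses translate back through $\ell$ to show that $x \mapsto x^{\sigma(x)}$ restricts to bijections on $O$ and on $E$. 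Combining with the trivial bijection on $\{p\}$ yields an exponential orthomorphism.

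The only mildly delicate step is pinning down the convention by which ``$k \cdot a_k$'' in the proposition is to be interpreted in $\ZZ/(p-1)$, namely that $k \in (\ZZ/p)^\times$ is being silently identified with its discrete log $\ell(k)$ via the chosen primitive root, so that the multiplicative bijection condition in $(\ZZ/p)^\times$ translates to the additive permutation condition in $\ZZ/(p-1)$. Beyond making this identification explicit, the proof is routine bookkeeping, and the proposition itself poses no combinatorial obstacle; the substantive content will appear when one actually tries to construct or count such tuples in Section~\ref{sec:construct}.
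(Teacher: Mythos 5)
Your proof is correct and follows the same route the paper intends: the paper states this proposition with no separate proof, treating it as an immediate consequence of the decomposition $\{1,\dots,n-1\} = O \sqcup E \sqcup \{p\}$ and the discrete-logarithm identification set up just before it, and your argument simply writes out that translation in full (the stability of the three pieces under exponentiation, the residue count mod $p-1$ recovering the multiset $S$, and the lifting step for the converse). Your explicit observation that $k$ must be identified with its discrete log $\ell(k)$ for the products $k a_k$ to live in $\ZZ/(p-1)$ is precisely the convention the paper's commutative diagram is encoding, so nothing is missing.
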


With this formulation we may now show the following.
\begin{proposition}
	If $n = 2p$ with $p$ prime, then $p-1$ is squarefree.
\end{proposition}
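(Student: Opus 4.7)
The plan is a proof by contradiction in the same spirit as the rank argument in Section~\ref{sec:nomult}: instead of tracking $\gcd(\cdot,n)$, I would track the $q$-adic valuation $v_q$. Suppose for contradiction that some prime $q$ satisfies $q^2 \mid p-1$. The input I would exploit is that the tuples $(k a_k)_{k=1}^{p-1}$ and $(k b_k)_{k=1}^{p-1}$ are both permutations of $\ZZ/(p-1)$, so that for every $j$ with $q^j \mid p-1$ there are exactly $(p-1)/q^j$ indices $k$ with $q^j \mid k a_k$, and similarly for $k b_k$.

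First I would apply this with $j=1$. Since already $(p-1)/q$ of the indices satisfy $q \mid k$, and these automatically satisfy $q \mid k a_k$, no further indices with $q \nmid k$ may satisfy $q \mid a_k$; hence $q \mid a_k \Rightarrow q \mid k$. Next I would apply the $j=2$ case and split by $v_q(k)$: indices with $v_q(k) \ge 2$ already account for the full quota $(p-1)/q^2$, indices with $v_q(k)=0$ contribute nothing by the previous step, while indices with $v_q(k)=1$ would contribute iff $q \mid a_k$. To avoid exceeding the quota, no $k$ with $v_q(k)=1$ may have $q \mid a_k$; combined with the $j=1$ step, this upgrades the implication to $q \mid a_k \Rightarrow q^2 \mid k$. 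The identical argument applied to the $b$-permutation gives $q \mid b_k \Rightarrow q^2 \mid k$ as well.

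The finish is a pigeonhole count inside $S = \{1,1,1,2,2,3,3,\dots,p-1,p-1\}$. The extra copy of $1$ is irrelevant for $q$-divisibility, so the number of multiples of $q$ in $S$ is exactly $2(p-1)/q$. Each of these must be placed either as some $a_k$ or $b_k$ with $q^2 \mid k$, or as the single value $c$; the number of such slots is $2(p-1)/q^2 + 1$. This forces $2(p-1)/q \le 2(p-1)/q^2 + 1$, i.e.\ $2(p-1)(q-1) \le q^2$, which is absurd since $q^2 \mid p-1$ implies $p-1 \ge q^2$ and hence the left-hand side is at least $2q^2(q-1) \ge 2q^2 > q^2$. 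The main obstacle I expect is correctly chaining the $j=1$ conclusion into the $j=2$ case-split and keeping the multiplicities in $S$ (with its tripled copy of $1$) straight; once that bookkeeping is in place, the contradiction is immediate.
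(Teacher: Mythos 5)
Your proof is correct and is essentially the paper's argument viewed in the complement: your implication $q \mid a_k \Rightarrow q^2 \mid k$ is the contrapositive of the paper's observation that $a_k, b_k$ must be coprime to $q$ whenever $v_q(k) \le 1$ (which the paper derives from the rank equality $R_{p-1}(ka_k) = R_{p-1}(k)$ rather than your two-step quota count), and your final pigeonhole on multiples of $q$ in $S$ is the complementary count to the paper's count of non-multiples. Both yield the same contradiction, so no changes are needed.
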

\begin{proof}
	This mirrors the proof of \ref{prop:mult_squarefree},
	with small modifications.
	As before we have
	\begin{align*}
		R_{p-1}\left( ka_k \right)
			&\ge \max \left\{ R_{p-1}(k), R_{p-1}(a_k) \right\}
			\ge R_{p-1}(k) \\
		R_{p-1}\left( kb_k \right)
			&\ge \max \left\{ R_{p-1}(k), R_{p-1}(b_k) \right\}
			\ge R_{p-1}(k).
	\end{align*}
	The change to the argument is that
	$a_k$ and $b_k$ are not collectively a permutation of $S$
	(since there is an extra unused element $c$).
	However, we may still conclude
	(since $ka_k$, $kb_k$ and $k$ are permutations of each other)
	that
	\[ R_{p-1}(ka_k) = R_{p-1}(kb_k) = R_{p-1}(k). \]
	Now suppose $q$ is a prime for which $q^2 \mid p-1$.
	Then as before, whenever the exponent of $q$ in $k$ is at most one,
	we would require $a_k$ and $b_k$ to not be divisible by $q$.
	So among $a_k$ and $b_k$ we need at least
	\[ 2 \cdot \frac{q^2-1}{q^2} (p-1) \]
	values to be not divisible by $q$,
	but in the multiset $S$ the number of such elements is
	\[ 1 + \frac{q-1}{q} \cdot 2(p-1)
		< 2 \cdot \frac{q^2-1}{q^2} (p-1) \]
	which is a contradiction.
\end{proof}

Together these propositions establish that $n$
must have the form described in Theorem~\ref{thm:whichexp}.

\section{Construction}
\label{sec:construct}
It remains to prove the converse of Theorem~\ref{thm:whichexp}
as well as Theorem~\ref{thm:countexp}.
This estimate requires several different components.

\subsection{Decomposition of functions as sums of two permutations}
We take the following lemma from \cite{SL2005C7}.
\begin{lemma}
	\label{lem:2005C7}
	Let $G$ be a finite abelian group.
	Given a function $f \colon G \to G$
	for which $\sum_{g \in G} f(g) = 0$,
	there exist two permutations $\pi_1, \pi_2 \colon G \to G$
	for which \[ f = \pi_1 + \pi_2. \]
\end{lemma}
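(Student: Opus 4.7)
The plan is to prove this by strong induction on $|G|$. The base case $|G|=1$ is immediate, since $f(0)=0$ forces $\pi_1=\pi_2$ to be the unique permutation. Before the inductive step, I would first observe that the hypothesis $\sum_g f(g) = 0$ is in fact necessary for such a decomposition: any pair of permutations satisfies $\sum_g \pi_1(g) + \sum_g \pi_2(g) = 2\sum_g g$, and this always vanishes in a finite abelian group because pairing each $g$ with $-g$ shows $\sum_g g$ is $2$-torsion, so doubling it gives $0$.

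The core of the argument is an exchange-style construction. I would start from any candidate permutation $\pi_1$ (for instance the identity on $G$) and define $\pi_2(g) \defeq f(g) - \pi_1(g)$. If $\pi_2$ is already a bijection we are done. Otherwise there exist $g_1 \neq g_2$ with $\pi_2(g_1) = \pi_2(g_2)$, and by a counting argument some value $w \notin \pi_2(G)$. I would then perform a local swap by reassigning $\pi_1(g_1) \mapsto f(g_1) - w$ so that $\pi_2(g_1)$ becomes the missing value $w$, and propagate the resulting conflict in $\pi_1$ along an augmenting path in the bipartite graph on two copies of $G$, until the total collision count of $\pi_2$ strictly decreases. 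After finitely many such improvements, $\pi_2$ is a bijection.

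The main obstacle is showing that such an augmenting path always exists without dead-ending, and this is precisely where the sum hypothesis is used: without $\sum_g f(g) = 0$, the parity of the defect is wrong and the process must stall. I would verify the existence of augmenting paths via a Hall-style condition on the auxiliary bipartite graph, and fall back on the inductive hypothesis to resolve any reduced sub-instance supported on a proper subgroup that arises during the exchange. Since this lemma coincides with IMO Shortlist 2005 problem C7, I would essentially be reconstructing the classical solution, with the mild generalization from $\ZZ/n$ to arbitrary finite abelian groups carried out by noting that the exchange argument never used the cyclicity of the ambient group.
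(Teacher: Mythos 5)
The paper offers no proof of this lemma at all --- it is imported as a black box from the cited shortlist problem (the statement is essentially a 1952 theorem of M.~Hall on abelian groups) --- so there is no in-paper argument to compare against, and your proposal must stand on its own. As written, it is an outline of the known exchange-type proof rather than a proof: the entire difficulty of the statement is concentrated in the one step you defer. Specifically, after setting $\pi_2 \defeq f - \pi_1$, you must show that whenever $\pi_2$ is not a bijection one can modify $\pi_1$ so that the number of values missed by $\pi_2$ strictly decreases. Your description of this step --- ``propagate the resulting conflict along an augmenting path until the total collision count strictly decreases'' --- is precisely the assertion that needs proof; you do not specify the auxiliary bipartite graph, do not state the Hall-type condition to be verified, and give no argument that the chain of reassignments terminates rather than cycling or creating new collisions. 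Concretely, once you reset $\pi_1(g_1) \defeq f(g_1) - w$, that value may already equal $\pi_1(g_3)$ for some $g_3$, and repairing $\pi_1$ there changes $\pi_2(g_3)$, potentially destroying as much as was gained. Ruling out a dead end is exactly where the hypothesis $\sum_{g} f(g) = 0$ must enter, and your stated mechanism for this (``the parity of the defect is wrong'') is not a meaningful invariant: for $G = \ZZ/n$ with $n$ odd the hypothesis is a congruence modulo $n$, not a parity condition.

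The two supporting devices are likewise unsubstantiated. It is never explained what ``reduced sub-instance supported on a proper subgroup'' arises during the exchange or how the inductive hypothesis would apply to it; and the passage from $\ZZ/n$ to a general finite abelian $G$ is not automatic --- Hall's proof for general $G$ runs its induction through a subgroup of prime index and is not a verbatim transplant of the cyclic argument, so ``the exchange argument never used cyclicity'' cannot be asserted before the exchange argument exists. (Your necessity observation, that $\sum_g \pi_1(g) + \sum_g \pi_2(g) = 2\sum_g g = 0$, is correct but plays no role afterwards.) To complete this you would need either to reproduce the full exchange argument with the termination proof, or to cite the result, as the paper does.
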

The results of \cite[Theorem 1.3]{eberhard} suggest
that it may be possible to improve this bound significantly
given ``reasonable'' assumptions on $f$,
but we will not do so here.

\subsection{Splitting Lemma}
For a set $T$ let $\Sigma T$ denote the sum of the elements of $T$.
We prove the following result.
\begin{lemma}
	\label{lem:split}
	Let $G$ be a finite abelian group of order $N$,
	and let $S = G \coprod G$ be considered a set of $2N$
	distinct elements.
	Then there exist at least
	\[ \frac{4^N}{2(N+1)^{\frac32}} \]
	subsets $T \subset S$ for which $|T| = N$,
	$\Sigma T = 0$.
\end{lemma}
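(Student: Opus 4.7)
The natural approach here is character-theoretic. Let $\hat G$ denote the dual group of $G$, which has order $N$. By orthogonality of characters, $\mathbf{1}_{g=0} = \frac{1}{N}\sum_{\chi \in \hat G}\chi(g)$ for any $g \in G$, so applying this to the sum $\Sigma T$ and summing over size-$N$ subsets $T$ of $S$ yields
\[
\#\{T \subseteq S : |T|=N,\ \Sigma T = 0\} = \frac{1}{N}\sum_{\chi \in \hat G}[z^N]\prod_{g \in G}(1+z\chi(g))^2,
\]
since the two copies of each $g \in S$ contribute independently and are marked by the variable $z$.

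Next I would evaluate each summand. The trivial character gives $[z^N](1+z)^{2N} = \binom{2N}{N}$, which will be the dominant main term (of order $4^N/\sqrt{N}$ by Stirling). For a nontrivial $\chi$ of order $d > 1$, the values $\{\chi(g)\}_{g \in G}$ run over the $d$th roots of unity, each appearing $N/d$ times (since $|\ker \chi| = N/d$). Using the identity $\prod_{\zeta^d = 1}(1+z\zeta) = 1-(-z)^d$, the contribution becomes $[z^N](1-(-z)^d)^{2N/d}$, which vanishes unless $d \mid N$ and otherwise equals $\pm\binom{2N/d}{N/d}$.

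I would then bound the total error from nontrivial characters. Since $\binom{2N/d}{N/d} \le 4^{N/d} \le 2^N$ for $d \ge 2$, and $|\hat G| - 1 = N-1$, the sum of their absolute contributions is at most $(N-1)\cdot 2^N$, which is dominated by $\binom{2N}{N}$ by a factor of order $2^N/N^{3/2}$. Combining this with an explicit lower bound such as $\binom{2N}{N} \ge 4^N/\sqrt{3N+1}$ (or $4^N/(2\sqrt N)$) gives
\[
\#\{T\} \ge \frac{1}{N}\left(\binom{2N}{N} - (N-1)2^N\right),
\]
and a routine algebraic manipulation then produces the claimed lower bound $\frac{4^N}{2(N+1)^{3/2}}$.

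The main obstacle I anticipate is calibrating the constant $\tfrac{1}{2}$ uniformly in $N$. For very small $N$ the crude error bound $(N-1)\cdot 2^N$ is not a clean fraction of $4^N/\sqrt N$, so one may need to either (i) refine the error by separating the characters of order $2$ (where $\binom{N}{N/2}$ admits the sharper bound $2^N/\sqrt{N+1}$) from higher-order characters, or (ii) verify the inequality directly for a handful of small values of $N$ and rely on asymptotic dominance for the rest. Neither fix is essentially difficult, but the bookkeeping may require care.
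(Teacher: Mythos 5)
Your proposal is correct and follows essentially the same route as the paper: a character (roots-of-unity) filter over $G$ whose trivial character yields the main term $\binom{2N}{N}$, with each of the $N-1$ nontrivial characters bounded by $2^N$, followed by a hand-check of small $N$; the only cosmetic differences are that you extract the coefficient $[z^N]$ directly where the paper uses a second root-of-unity filter on $|T| \bmod N$ (hence its $+2$ correction for $T=\emptyset$ and $T=S$), and that you evaluate the nontrivial contributions exactly as $\pm\binom{2N/d}{N/d}$ before falling back on the same crude bound.
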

\begin{proof}
	According to the structure theorem of abelian groups
	we may write $G = \Zc{r_1} \times \dots \times \Zc{r_m}$,
	where $r_1 \mid r_2 \mid \dots \mid r_m$.
	In this way, we may think of each element $g \in G$
	as a vector $g = (g_1, \dots, g_m) \in G$.
	(In particular $(\Sigma T)_j$ refers to the $j$th coordinate of $\Sigma T$,
	since $\Sigma T \in G$).

	For each $i$ let $\zeta_i$ be a primitive $r_i$th root of unity,
	and let $\eta$ be a primitive $N$th root of unity.
	We now define
	\begin{align*}
		F(e_1, \dots, e_m, d)
		&= \prod_{g \in G} \left( 1 +
			\zeta_1^{e_1 g_1} \cdots \zeta_m^{e_m g_m} \eta^{d} \right)^2 \\
		&= \prod_{g \in S} \left( 1 +
			\zeta_1^{e_1 g_1} \cdots \zeta_m^{e_m g_m} \eta^{d} \right). \\
		\intertext{Expanding completely, we also have the representation}
		F(e_1, \dots, e_m, d)
		&= \sum_{T \subset S}
			\zeta_1^{e_1 (\Sigma T)_1}
			\cdots \zeta_m^{e_m (\Sigma T)_m}
			\eta^{d |T|}.
	\end{align*}
	Now consider the sum
	\[ A = \sum_{e_1=0}^{r_1-1} \dots \sum_{e_m=0}^{r_m-1}
		\sum_{d=0}^{N-1} F(e_1, \dots, e_m, d). \]
	On the one hand, we find that
	\begin{align*}
		A &= \sum_{e_1=0}^{r_1-1} \dots \sum_{e_m=0}^{r_m-1} \sum_{d=0}^{N-1}
		\left[ \sum_{T \subset S} \zeta_1^{e_1 (\Sigma T)_1}
			\cdots \zeta_m^{e_m (\Sigma T)_m} \eta^{d |T|} \right] \\
		&= \sum_{e_1=0}^{r_1-1} \dots \sum_{e_m=0}^{r_m-1}
		\left[ \sum_{T \subset S} \zeta_1^{e_1 (\Sigma T)_1}
			\cdots \zeta_m^{e_m (\Sigma T)_m}
			\left[ \sum_{d=0}^{N-1} (\eta^{|T|})^d \right] \right]. \\
		\intertext{Note that the innermost sum is $N$ if $|T| \equiv 0 \pmod n$,
		and $0$ otherwise. Thus we may now write}
		A &= \sum_{\substack{T \subset S
				\\ |T| \equiv 0 \pmod n}}
			N \prod_{i=1}^m \left(  \sum_{e_i=0}^{r_i-1}
				\zeta_i^{e_i (\Sigma T)_i} \right) \\
		&= \sum_{\substack{T \subset S
				\\ |T| \equiv 0 \pmod n \\ \Sigma T = 0}}
			N r_1 \cdots r_m \\
		&= N^2 \left\lvert \left\{ T \subset S :
			|T| \equiv 0 \pmod n, \; \Sigma T = 0 \right\} \right\rvert \\
		&= N^2 \left( 2 + \left\lvert \left\{ T \subset S :
			|T| = n, \; \Sigma T = 0 \right\} \right\rvert \right).
	\end{align*}
	On the other hand, we have the bounds
	\[ |F(e_1, \dots, e_m, d)| < \left( 2^{\frac{N}{r_i}} \right)^2
		\text{ if } e_i \neq 0. \]
		Moreover,
	\[ \sum_d F(0,\dots,0,d) = \sum_d (1+\eta^d)^{2N}
		= N \left( 2 + \binom{2N}{N} \right). \]
	Thus, we have the estimate
	\[ A \ge N \left( 2 + \binom{2N}{N} \right) - N(N-1) \cdot 2^N \]
	and consequently
	\[
		\# \left\{ T \subset S :
			|T| = n, \; \Sigma T = 0 \right\}
		\ge -2 + \frac{2 + \binom{2N}{N} - (N-1) \cdot 2^N}{N}.
	\]
	Using the estimate $\binom{2N}{N} \ge \frac{4^N}{\sqrt{4N}}$
	one can verify the above is at least
	\[ \frac{A}{N^2}-2 \ge \frac{4^N}{2(N+1)^{3/2}} \]
	for $N \ge 8$.
	All that remains is to examine the cases $N \le 7$,
	which can be checked by hand by explicitly computing $A$.
\end{proof}
\begin{remark*}
	Lemma~\ref{lem:split} has appeared in various specializations;
	for example, the case where $G = \Zc{p}$ was
	the closing problem of the 1996 International Mathematical Olympiad,
	in which the exact answer
	$\frac1p \left( \binom{2p}{p} -2 \right) + 2$ is known.
\end{remark*}

\subsection{Main construction}
We now prove Theorem~\ref{thm:countexp}.
\begin{proof}
We begin by constructing a partially ordered set
on the divisors of $p-1 = 2q_1 \cdots q_k$, ordered by divisibility;
hence we obtain the Boolean lattice with $2^{k+1}$ elements.
At the node $d$ in the poset we write down the elements $x \in \{1, \dots, n-1\}$
for which $\gcd(x,p-1) = d$;
this gives $2\varphi( (p-1)/d )$ elements written at each node except the first one,
for which we have $2\varphi(p-1)+1$ elements.

Then, we iteratively repeat the following process,
starting at the bottom node $d=1$:
\begin{itemize}
	\ii Note there are three labels which are $1 \pmod{\frac{p-1}{d}}$.
	Pick one of these three numbers $x$ arbitrarily, and erase it.
	\ii If $d=p-1$, stop.
	Otherwise, pick one node $d'$ immediately above $d$,
	and write $x$ at that node $d'$.
	\ii Move to the node $d'$,
	which now has three labels which are $1 \pmod{\frac{p-1}{d'}}$,
	and continue the process.
\end{itemize}
An example of this process with $n=14$ (giving $p-1=6$)
is shown in Figure~\ref{fig:ex1}.

\begin{figure}[ht]
	\begin{center}
	\begin{tikzcd}
		& 6: \{6, 12\} \ar[ld, dash] \ar[rd, dash] & \\
		2: \{2,4,8,10\} \ar[rd, dash] & & 3: \{3, 9\} \ar[ld, dash] \\
		& 1: \{1,5,7,11,13\} &
	\end{tikzcd}
	\bigskip
	\begin{tikzcd}
		& 6: \{6, \mathbf{10} \} \ar[ld, leftarrow, "10", swap] \ar[rd, dash]
		\ar[r, "12"]
		& (\text{delete } 12) \\
		2: \{2,4,\mathbf{7},8\} \ar[rd, leftarrow, "7"] & & 3: \{3, 9\} \ar[ld, dash] \\
		& 1: \{1,5,11,13\} &
	\end{tikzcd}
	\end{center}
	\caption{An example of the algorithm described.
	The initial poset before the algorithm is shown on top.
	Thereafter, we pick the chain $1 \to 2 \to 6$
	and move the elements $7$, $10$, $12$.
	This gives the poset at the bottom.}
	\label{fig:ex1}
\end{figure}
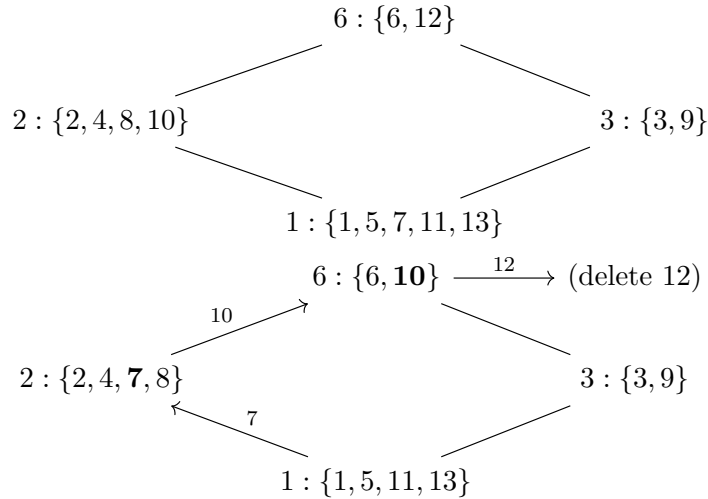

Evidently, there are $3^{k+2} (k+1)!$ ways to run the algorithm,
and each application gives a different set of labels at the end.
We will use each labeled poset to exhibit several exponential orthomorphisms.
For each $d \mid p-1$, let $L_d$ denote the labels at the node $d$.

As in the previous section,
we identify all the elements of $\{1, \dots, 2p-1\} \setminus \{p\}$
with the set
\[ Z = E \sqcup O = (\Zc{p})^\times \sqcup (\Zc{p})^\times. \]
Now consider any $d \mid p-1$, let $e= \frac{p-1}{d}$
and let $m = \varphi(e)$.
There are $2m$ elements $x \in Z$
for which $R_{p-1}(x) = d$;
they can be thought of as $G \sqcup G$ where
$G = (\Zc{\frac{p-1}{d}})^\times \cong \Zc{m}$.
The labels written at node $d$ can be thought of in the same way.

We will match these to the labels written at the node $d$ in our poset.
By Lemma~\ref{lem:split}, the number of ways to split the labels 
into two halves $L = L_E \sqcup L_O$,
such that each half has vanishing product,
is at least
\[ \max\left(\frac{4^{m}}{2(m+1)^{3/2}}, 2\right)
	\ge \frac{4^{\varphi(e)}}{2e^{3/2}}. \]
(Here we have used the fact that $\varphi(e)+1 \le e$ for $e \neq 1$).
Moreover, by Lemma~\ref{lem:2005C7},
there exists at least one way to choose a bijection
$\sigma \colon E \to L_E$ so that the map $x \mapsto x\sigma(x)$
is a bijection on $E$;
of course the analogous result holds for $\sigma \colon O \to L_O$.
Hence we've defined $\sigma$ as a bijection
on the elements $x \in Z$ with $R_{p-1}(x) = d$, as desired.

Finally, we label the special element $p$
with the single unused number left over from the algorithm.
Thus we get a bijection $\sigma$ on
the entirety of $\{1, \dots, 2p-1\}$.

The number of orthomorphisms we've constructed is at least
\begin{align*}
	(k+2)! \cdot 3^{k+1}
	\prod_{e \mid p-1} \frac{4^{\varphi(e)}}{2e^{3/2}}
	&= (k+2)! \cdot 3^{k+1}
	\frac{4^{p-1}}{2^{2^{k+1}} \left[ (p-1)^{2^k} \right]^{3/2}} \\
	&= (k+2)! \cdot 3^{k+1}
	\frac{2^{n-2}}{2^{2^{k+1}} \left( \frac{n-2}{2} \right)^{3 \cdot 2^{k-1}}} \\
	&= (k+2)! \cdot 3^{k+1}
	\frac{2^{n-2-2^{k+1}+3\cdot2^{k-1}}}{(n-2)^{3 \cdot 2^{k-1}}} \\
	&= \frac{(k+2)! \cdot 3^{k+1} \cdot
		2^{n-2^{k-1}}}{4(n-2)^{3 \cdot 2^{k-1}}}.
\end{align*}
This concludes the proof.
\end{proof}

\subsection*{Acknowledgments}
This research was funded by NSF grant 1659047,
as part of the 2017 Duluth Research Experience for Undergraduates (REU).
The author thanks Joe Gallian for supervising the research,
and for suggesting the problem.
The author is also grateful to Joe Gallian and Ian Wanless,
as well as the anonymous referee, for comments on drafts of the paper.

\bibliographystyle{hplain}
\bibliography{refs}

\end{document}